\documentclass[12pt]{amsart}
\usepackage{amssymb}
\textwidth 15.1cm \textheight 21.08cm \topmargin 0.0cm
\oddsidemargin 0.0cm \evensidemargin 0.0cm
\parskip -0.0cm
\newcommand{\bn}[1]{\[#1\]}
\newcommand{\be}[2]{\begin{equation}\label{#1} {#2}\end{equation}}

\newtheorem{lemme}{Lemma}

\newtheorem{theorem}{Theorem}
\newtheorem{remark}{Remark}


\begin{document}

\title [ Monotonicity  of $q$--Kummer confluent hypergeometric and $q$--hypergeometric functions ]{ Monotonicity of ratios of $q$--Kummer confluent hypergeometric and $q$--hypergeometric functions and associated Tur\'an types inequalities \\}%

\author[ K. Mehrez,\; S.M. Sitnik]{ Khaled Mehrez,\; Sergei M. Sitnik }

 \address{Khaled Mehrez. D\'epartement de Math\'ematiques IPEIM. Monastir 5000, Tunisia.}
 \email{k.mehrez@yahoo.fr}
 \address{Sergei M. Sitnik. Voronezh Institute of the Russian Ministry of Internal Affairs.
Voronezh, Russia.}
\email{pochtaname@gmail.com}

\begin{abstract}
In this paper we prove monotonicity of some ratios of $q$--Kummer confluent hypergeometric and $q$--hypergeometric functions. The results are also closely connected with Tur\'an type inequalities. In order to obtain  main results we apply  methods developed for the  case of classical Kummer and Gauss hypergeometric functions  in \cite{MS1}-\cite{MS2}.
\end{abstract}
\maketitle
{\it Keywords:} Kummer functions, Gauss hypergeometric functions, $q$--Kummer confluent hypergeometric functions, $q$--hypergeometric functions, Tur\'an type inequalities.
\section{\textbf{Introduction}}
In 1941 while studying the zeros of Legendre polynomials the Hungarian mathematician Paul
Tur\'{a}n discovered the following inequality
\[
P_{n-1}(x)P_{n+1}(x)\le\left[P_{n}(x)\right]^{2},\]
where $|x|\le 1,\;n\in\mathbb{N}=\{1,2,...\}$ and $P_n$ stands  for  the  classical  Legendre  polynomial.   This
inequality was published by P. Tur\'an only in 1950 in \cite{T50}.  However, since the publication in 1948
by G. Szeg\H{o} \cite{S48} of the above famous Tur\'{a}n inequality for Legendre polynomials many authors
have deduced analogous results for classical  polynomials and special functions. It has been shown by several researchers that the most important polynomials (e.g.   Laguerre,  Hermite,  Appell,  Bernoulli,  Jacobi,  Jensen, Pollaczek,  Lommel, Askey--Wilson, ultraspherical) and special functions (e.g.  Bessel, modified Bessel,
gamma, polygamma, Riemann zeta) satisfy  Tur\'an type inequalities.  In 1981 one of the
PhD students of P. Tur\'an, L. Alp\'ar \cite{Alp} in Tur\'an's biography mentioned that the above Tur\'an
inequality had a wide--ranging effect, this inequality was dealt with in more than 60 papers. Also  Tur\'an type inequalities  are closely connected with log--convexity and log--concavity of hypergeometric--like functions, cf. \cite{Karp1}-\cite{Karp2}. A survey of recent results on Tur\'an type inequalities \cite{C} is published in the proceedings of the conference \cite{Proc} dedicated to Paul Tur\'an's achievements in different areas of mathematics and applications.

Since Tur\'an's inequality was first
investigated for  orthogonal polynomials in hypergeometric representation afterwards such inequalities
were extensively studied  for various hypergeometric functions as well, e.g. in \cite{arb} Tur\'an type inequalities for the q--Kummer and q--hypergeometric functions were proved.

In \cite{MS1}-\cite{MS2} in terms of monotonicity of ratios of Kummer, Gauss and generalized hypergeometric functions the authors  presented some new Tur\'an type inequalities. They are connected with  problems having some history.

Let us consider the series for the exponential function
$$
\exp(x)=e^x=\sum_{k=0}^{\infty}\frac{x^k}{k!}, \ x\ge 0,
$$
its section $S_n(x)$ and series remainder $ R_n(x)$ in the form
\be{def}{S_n(x)= \sum_{k=0}^{n}\frac{x^k}{k!},
\ R_n(x)=\exp(x)-S_n(x)=\sum_{k=n+1}^{\infty}\frac{x^k}{k!}, \,x\ge 0.}

Besides simplicity and elementary nature of these functions many mathematicians studied problems for them.
G.~Szeg\H{o} proved a remarkable limit distribution for zeroes of sections, accumulated along so--called   the Szeg\H{o} curve (\cite{ESV}). S.~Ramanujan seems was the first who proved the non--trivial  inequality for exponential sections in the form (\cite{Ram}, pp. 323--324) : if
$$
\frac{e^n}{2}=R_{n-1}(n)+\frac{n^n}{n!}\theta(n)
$$
then
$$
\frac{1}{3} < \theta(n)=\frac{n!\left(\frac{e^n}{2} - R_{n-1}(n)\right)}{n^n} < \frac{1}{2}.
$$
This result is important as it also leads to explicit  rational bounds for $e^n$
as it was specially pointed out in (\cite{Ram}, pp. 323--324).

In the preprint  \cite{Sit1} in 1993 were thoroughly studied inequalities of the form
\be{inq}{m(n)\le f_n(x)=\frac{R_{n-1}(x)R_{n+1}(x)}{\left[R_n(x)\right]^2} \le M(n), \,x\ge 0.}
The search for the best constants $m(n)=m_{best}(n),\,M(n)=M_{best}(n)$ has some history. The left--hand side of (\ref{inq}) was first proved by Kesava Menon in \cite{KM} with
$m(n)=\frac{1}{2}$ (not best) and by Horst Alzer in \cite{ALZ} with
\begin{equation}\label{mbest}
m_{best}(n)=\frac{n+1}{n+2}=f_n(0),
\end{equation}
cf. \cite{Sit1} for the more detailed history.
In  \cite{Sit1} it was also shown that in fact the inequality (\ref{inq}) with  the sharp lower constant (\ref{mbest}) is a special case of the stronger inequality proved earlier in 1982 by Walter Gautschi  in \cite{Gau1}.

It seems that the right--hand side of (\ref{inq}) was first proved by the author in \cite{Sit1} with $M_{best}=1=f_n(\infty)$. In \cite{Sit1} dozens of generalizations of inequality   (\ref{inq}) and related results were proved. May be in fact it was the first example of so called Turan--type inequality for special case of the Kummer hypergeometric functions.

Obviously the above inequalities are consequences of the next conjecture  originally formulated in \cite{Sit1} and recently revived in \cite{Sita1}--\cite{Sita2}.

\textit{\textbf{Conjecture 1.} The function $f_n(x)$ in  (\ref{inq}) is monotone increasing\\ for $x\in [0; \infty), n\in \mathbb{N}$.} So the next inequality is valid
\be{mon}{\frac{n+1}{n+2}=f_n(0) \le f_n(x) < 1=f_n(\infty).}

In 1990's we tried to prove this conjecture in the straightforward manner  by expanding an inequality  $(f_n(x))^{'}\ge 0$ in series and multiplying  triple products of  hypergeometric functions but failed (\cite{Sit2}--\cite{Sita1}).

Consider a representation via Kummer hypergeometric functions
\be{Kum}{f_n(x)=\frac{n+1}{n+2} \ g_n(x), \ g_n(x)=\frac{_1F_1(1; n+1; x)  _1F_1(1; n+3; x)}{\left[_1F_1(1; n+2; x)\right]^2}.}
So the conjecture 1 may be reformulated in terms of this function $g_n(x)$ as conjecture 2.

\textit{\textbf{Conjecture 2.} The function $g_n(x)$ in  (\ref{Kum}) is monotone increasing\\ for $x\in [0; \infty), n\in \mathbb{N}$.}

This leads us to the next more general

\textit{\textbf{Problem 1.} Find monotonicity  in $x$ conditions for $x\in [0; \infty)$\\ for all parameters a,b,c for the function}
\be{prob}{h(a,b,c,x)=\frac{_1F_1(a; b-c; x)  _1F_1(a; b+c; x)}{\left[_1F_1(a; b; x)\right]^2}.}

We may also call (\ref{prob}) mockingly (in Ramanujan way, remember his mock theta--functions!) "\textit{The abc--problem}" for Kummer hypergeometric functions, why not?

Another generalization is to change Kummer  hypergeometric functions to higher ones.

\textit{\textbf{Problem 2.} Find monotonicity  in $x$ conditions for $x\in [0; \infty)$ for all\\ vector--valued parameters a,b,c for the function}
\be{probpq}{h_{p,q}(a,b,c,x)=\frac{_pF_q(a; b-c; x)  _pF_q(a; b+c; x)}{\left[_pF_q(a; b; x)\right]^2},}
\bn{a=(a_1,\ldots, a_p), b=(b_1,\ldots, b_q), c=(c_1, \ldots, c_q).}

This is "\textit{The abc--problem}" for generalized hypergeometric functions. The more complicated problems are obvious and may be considered for pairs or triplets of parameters and also for multivariable hypergeometric functions.

Recently the above problems 1,2 and conjectures 1,2 were proved by the authors \cite{MS1}--\cite{MS2}.
In this paper we prove $q$--versions of these results for the classical Kummer and Gauss hypergeometric functions, cf. also \cite{MS3}.

Next let us recall the following results which will be used in the sequel.
\begin{lemme}\label{l1}
Let $(a_{n})$ and $(b_{n})$ $(n=0,1,2...)$ be real numbers, such that $b_{n}>0,\;n=0,1,2,...$ and $\left(\frac{a_{n}}{b_{n}}\right)_{n\geq 0}$ is increasing (decreasing), then $\left(\frac{a_{0+}...+a_{n}}{b_{0}+...+b_{n}}\right)_{n}$ is also increasing (decreasing).
\end{lemme}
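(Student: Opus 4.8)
The plan is to reduce the claim to a single cross--multiplication inequality between consecutive partial sums. Write $A_n = a_0 + \cdots + a_n$ and $B_n = b_0 + \cdots + b_n$. Since every $b_k > 0$ we have $B_n > 0$ for all $n$, so the ratios $A_n/B_n$ are well defined, and to prove monotonicity it suffices to control the sign of the difference of two consecutive terms. I would treat the increasing case explicitly; the decreasing case follows by reversing all inequalities, or simply by applying the increasing case to the sequence $(-a_n)$.

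First I would compute
\[
\frac{A_{n+1}}{B_{n+1}} - \frac{A_n}{B_n} = \frac{A_{n+1}B_n - A_n B_{n+1}}{B_n B_{n+1}}.
\]
Because $B_n B_{n+1} > 0$, the monotonicity of $(A_n/B_n)$ is governed entirely by the sign of the numerator. Substituting $A_{n+1} = A_n + a_{n+1}$ and $B_{n+1} = B_n + b_{n+1}$ makes the numerator collapse to $a_{n+1}B_n - b_{n+1}A_n$, so the whole lemma reduces to establishing
\[
a_{n+1}B_n \ge b_{n+1}A_n
\]
for every $n$ in the increasing case.

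The heart of the argument is to extract this inequality from the hypothesis on $(a_k/b_k)$. Assuming $\left(\frac{a_k}{b_k}\right)_{k\ge 0}$ is increasing, for every $k \le n$ we have $\frac{a_{n+1}}{b_{n+1}} \ge \frac{a_k}{b_k}$, and clearing the positive denominators yields $a_{n+1}b_k \ge a_k b_{n+1}$. Summing these $n+1$ inequalities over $k = 0, \ldots, n$ produces exactly $a_{n+1}B_n \ge b_{n+1}A_n$, which is what was required.

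There is no genuine obstacle here beyond bookkeeping; the only point demanding care is that positivity of the $b_k$ is invoked twice, and in two distinct roles. It is used first to guarantee $B_n B_{n+1} > 0$, so that dividing by this quantity preserves the direction of the inequality, and second to clear denominators when passing from $\frac{a_{n+1}}{b_{n+1}} \ge \frac{a_k}{b_k}$ to $a_{n+1}b_k \ge a_k b_{n+1}$ without flipping the sign. Keeping track of these two uses is the whole content of the proof.
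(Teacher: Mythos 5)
Your proof is correct and complete: the telescoping of the numerator $A_{n+1}B_n - A_nB_{n+1}$ to $a_{n+1}B_n - b_{n+1}A_n$, followed by summing the cross-multiplied inequalities $a_{n+1}b_k \ge a_k b_{n+1}$ over $k \le n$, is exactly the standard argument for this classical lemma, and your handling of the two distinct uses of $b_k > 0$ and of the decreasing case via $(-a_n)$ is sound. Note that the paper itself states Lemma~\ref{l1} without any proof or citation, so there is no in-paper argument to compare against; your write-up supplies the missing justification.
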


\begin{lemme}\label{l2}(cf. \cite{BK}--\cite{PV}).
Let $(a_{n})$ and $(b_{n})$ $(n=0,1,2...)$ be real numbers and
let the power series $A(x)=\sum_{n=0}^{\infty}a_{n}x^{n}$ and $B(x)=\sum_{n=0}^{\infty}b_{n}x^{n}$
be convergent if $|x|<r$. If $b_{n}>0,\, n=0,1,2,...$ and if the
sequence $\left(\frac{a_{n}}{b_{n}}\right)_{n\geq0}$is (strictly)
increasing (decreasing) , then the function $\frac{A(x)}{B(x)}$ is also
(strictly) increasing on $[0,r[$.
\end{lemme}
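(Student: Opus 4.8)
The plan is to prove that the ratio $A/B$ has a derivative of constant sign on $[0,r)$ by reducing everything to the sign of the Taylor coefficients of a single auxiliary power series. Since $b_n>0$ for every $n$ and $x\ge 0$, we have $B(x)\ge b_0>0$ for all $x\in[0,r)$, so $B$ never vanishes there and the quotient is differentiable with
\be{deriv}{\left(\frac{A}{B}\right)'(x)=\frac{W(x)}{B(x)^2},\qquad W:=A'B-AB'.}
Because power series may be differentiated and multiplied term by term inside their common disc of convergence, $W$ is itself a power series convergent for $|x|<r$. Thus it suffices to show that $W$ has nonnegative coefficients: then $W(x)\ge 0$ for $x\in[0,r)$, whence $(A/B)'\ge 0$ and $A/B$ is increasing. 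I will treat the increasing case; the decreasing case is identical with all inequalities reversed.

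First I would compute the coefficients of $W$ explicitly. Writing $A'(x)=\sum_{k\ge 0}(k+1)a_{k+1}x^{k}$ and similarly for $B'$, the Cauchy products collapse to
\be{wcoef}{W(x)=\sum_{k,m\ge 0}(k-m)\,a_k b_m\,x^{k+m-1},}
so the coefficient of $x^{N-1}$ is $\sum_{k+m=N}(k-m)a_k b_m$ (the diagonal $k=m$ contributes nothing). This coefficient is not manifestly signed, and the decisive step is to symmetrize it: pairing $(k,m)$ with $(m,k)$ and setting $c_n:=a_n/b_n$, one rewrites it as
\be{wsym}{\frac12\sum_{k+m=N}(k-m)\bigl(a_k b_m-a_m b_k\bigr)=\frac12\sum_{k+m=N}(k-m)\,b_k b_m\,(c_k-c_m).}

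Now the hypothesis enters. Since $(c_n)$ is increasing, the two factors $(k-m)$ and $(c_k-c_m)$ always carry the same sign, and $b_k b_m>0$; hence every summand in (\ref{wsym}) is $\ge 0$, so every coefficient of $W$ in (\ref{wcoef}) is nonnegative. This gives $W(x)\ge 0$ on $[0,r)$ and, via (\ref{deriv}), the asserted monotonicity. If $(c_n)$ is strictly increasing, then for each $N\ge 1$ the off-diagonal pairs $k\ne m$ contribute strictly positive terms, so $W(x)>0$ for $x\in(0,r)$ and $A/B$ is strictly increasing there.

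The only genuinely delicate point is the symmetrization (\ref{wsym}): recognizing $a_k b_m-a_m b_k=b_k b_m(c_k-c_m)$ is exactly what turns the sign-indefinite sum $\sum(k-m)a_k b_m$ into a manifestly nonnegative one. This is the same rearrangement (Chebyshev-type) mechanism that underlies Lemma \ref{l1}, so the present statement may be regarded as its power-series counterpart; everything else --- term-by-term differentiation, the Cauchy product, and the non-vanishing of $B$ --- is routine inside the radius of convergence.
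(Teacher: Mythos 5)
Your proof is correct: the identity $W=A'B-AB'=\sum_{k,m}(k-m)a_kb_mx^{k+m-1}$, the symmetrization over $(k,m)\leftrightarrow(m,k)$, and the factorization $a_kb_m-a_mb_k=b_kb_m(c_k-c_m)$ together give nonnegative (resp.\ positive) coefficients for $W$, which is exactly what is needed. The paper itself offers no proof of this lemma --- it is quoted with a citation to Biernacki--Krzyz and Ponnusamy--Vuorinen --- and your argument is precisely the standard one found in those sources, so there is nothing to reconcile.
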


\section{\textbf{Notations and preliminaries}}
Throughout this paper, we fix $q\in]0,1[.$ We refer to \cite{kac}, \cite{olbc} and \cite{gr} for the definitions, notations and properies of  the $q$--shifted factorials and $q$--hypergeometric functions.

\subsection{Basic symbols}

Let $a\in\mathbb{R}$ then $q$--shifted factorials are defined by
\[
(a;q)_{0}=1,\,\,\,\,\,(a;q)_{n}=\prod_{k=0}^{n-1}(1-aq^{k}),\,\,\,(a;q)_{\infty}=\prod_{k=0}^{\infty}(1-aq^{k}),\]
and we write
$$(a_{1},a_{2},...,a_{p};q)=(a_{1};q)_{n}(a_{1};q)_{n}...(a_{p};q)_{n},\;n=0,1,2,...$$
Note that for $q\longrightarrow1$ the expression $\frac{(q^{a};q)_{n}}{(1-q)^{n}}$ tends to $(a)_{n}=a(a+1)...(a+n-1).$

\subsection{$q$-Kummer confluent hypergeometric functions}
The $q$--Kummer confluent hypergeometric function is defined by
\begin{equation}
\phi(q^{a},q^{c};q,x)=_{1}\phi_{1}(q^{a},q^{c};q,(1-q)x)=\sum_{n\geq0}\frac{(q^{a};q)_{n}(1-q)^{n}}{(q^{c};q)(q;q)_{n}}x^{n},
\end{equation}
for all $a,c\in\mathbb{R}$ and $x>0,$ which for $q\longrightarrow1$ is reduced to the Kummer confluent hypergeometric function
$$_{1}F_{1}(a;c;x)=\sum_{n=0}^{\infty}\frac{(a)_{n}}{(c)_{n}n!}x^n.$$

\subsection{$q$- hypergeometric functions}
The $q$--hypergeometric series or basic hypergeometric series is defined by \cite{olbc},\cite{gr}
\begin{eqnarray}\label{kh}
_{p}\Phi_{r}(a_{1},...,a_{p};b_{1},...,b_{r};q;x)=\nonumber\\
=\sum_{n=0}^{\infty}\frac{(a_{1};q)_{n}(a_{2};q)_{n}...(a_{p};q)_{n}}{(b_{1};q)_{n}(b_{2};q)_{n}...(b_{r},q)_{n}(q;q)_{n}}\left[(-1)^{n}q^{(_{2}^{n})}\right]^{1+r-p}x^{n}
\end{eqnarray}
with $(_{2}^{n})=\frac{n(n-1)}{2},\;\;a_{k},b_{k}\in\mathbb{R}\in\mathbb{C},\;b_{k}\neq q^{-n},\,\, k=1,...,r,\, n\in\mathbb{N}_{0},\,\,0<|q|<1.$
The left hand side of (\ref{kh})  represents the $q$--hypergeometric function ${}_{p}\phi_{r}$ where the series converges. Assuming $0<|q|<1,$ the following conditions are valid for the convergence of  (\ref{kh}) (cf.\cite{gr}).
\begin{itemize}
\item $p<r+1$: the series converges absolutely for $x\in\mathbb{C},$
\item $p=r+1:$ the series converges for $|x|<1,$
\item $p>r+1:$ the series converges only for $x=0$, unless it terminates.
\end{itemize}

Since for $q\longrightarrow1$ the expression $\frac{(q^{a};q)_{n}}{(1-q)^{n}}$ tends to $(a)_{n}=a(a+1)...(a+n-1),$ we evaluate
\[
\lim_{q\longrightarrow1}{}_{p}\Phi_{r}(q^{a_{1}},...,q^{a_{p}};q^{b_{1}},...,q^{b_{r}};q;x)=_{p}F_{r}(a_{1},...,a_{p};b_{1},...,b_{r};x)=\sum_{n=0}^{\infty}\frac{(a_{1})_{n}...(a_{p})_{n}}{(b_{1})_{n}...(b_{r})_{n}n!}x^{n},\]
where $_{p}F_{r}$ stands for the generalized hypergeometric function.

\section{\textbf{Monotonicity of ratios of $q$--Kummer hypergeometric functions}}
In this section we consider the function
\begin{equation}
h(a,b,c,q,x)=\frac{\phi(q^{a},q^{b-c},q,x)\phi(q^{a},q^{b+c},q,x)}{\left[\phi(q^{a},q^{b},q,x)\right]^{2}},
\end{equation}
for all $a,b\in\mathbb{R}\; \textrm{and}\; x>0. $
The following theorem is the $q$--version of the theorem 1 from (\cite{MS1}--\cite{MS2}).
\begin{theorem}\label{tt1}
Let $q\in]0,1[,$ if $a>b>c>0,\, b>1,$ then the function $x\longmapsto h(a,b,c,q,x)$
is increasing on $[0,\infty[.$ In particular, the following Tur\'an
type inequality is valid for all $a>b>c>0,\, b>1$ and $q\in]0,1[$
\begin{equation}\label{t1}
\left[\phi(q^{a},q^{b},q,x)\right]^{2}\leq\phi(q^{a},q^{b-c},q,x)\phi(q^{a},q^{b+c},q,x).
\end{equation}
\end{theorem}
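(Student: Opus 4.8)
The plan is to reduce the claimed monotonicity to a convexity statement in the parameter and then to a coefficientwise inequality that can be fed into Lemmas \ref{l1} and \ref{l2}. For fixed $a,q,x$ set $G(\beta,x)=\log\phi(q^{a},q^{\beta},q,x)$. Since $\phi>0$ on $[0,\infty[$, the function $h$ is increasing if and only if $\partial_x\log h\ge 0$, i.e.
\[
\partial_x G(b-c,x)+\partial_x G(b+c,x)-2\,\partial_x G(b,x)\ge 0 .
\]
By the integral form of Taylor's formula in the variable $c$,
\[
G(b-c,x)+G(b+c,x)-2G(b,x)=\int_{0}^{c}(c-t)\bigl[\partial_\beta^{2}G(b+t,x)+\partial_\beta^{2}G(b-t,x)\bigr]\,dt,
\]
so, differentiating in $x$ and using $c-t\ge 0$, it suffices to prove the single mixed inequality $\partial_x\,\partial_\beta^{2}\,\log\phi(q^{a},q^{\beta},q,x)\ge 0$ for $\beta\in[b-c,b+c]$ and $x\ge 0$.

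First I would record the engine of the whole argument: the coefficients
\[
c_{n}(\beta)=\frac{(q^{a};q)_{n}(1-q)^{n}}{(q^{\beta};q)_{n}(q;q)_{n}}
\]
are logarithmically convex in $\beta$, since a direct computation gives
\[
\partial_\beta^{2}\log c_{n}(\beta)=(\log q)^{2}\sum_{k=0}^{n-1}\frac{q^{\beta+k}}{(1-q^{\beta+k})^{2}}>0 .
\]
Because a sum of logarithmically convex functions is again logarithmically convex, $\beta\mapsto\phi(q^{a},q^{\beta},q,x)=\sum_{n\ge0}c_{n}(\beta)x^{n}$ is log-convex in $\beta$ for every fixed $x\ge0$. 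Evaluating the resulting midpoint inequality at $\beta=b\mp c$ already yields the Tur\'an type inequality (\ref{t1}); I note that this step needs only $b>c>0$ and is independent of the remaining hypotheses.

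For the stronger monotonicity I would write $\partial_\beta^{2}\log\phi=W/S^{2}$, where $S=\phi(q^{a},q^{\beta},q,x)$ and $W=\phi_{\beta\beta}\,\phi-\phi_{\beta}^{2}$, all differentiations being in $\beta$; writing $c_{n}'=\partial_\beta c_{n}$ and $c_{n}''=\partial_\beta^{2}c_{n}$, both $W=\sum_{N}w_{N}x^{N}$ and $S^{2}=\sum_{N}s_{N}x^{N}$ are power series with
\[
w_{N}=\tfrac12\sum_{m+n=N}\bigl(c_{m}''c_{n}+c_{m}c_{n}''-2c_{m}'c_{n}'\bigr),\qquad s_{N}=\sum_{m+n=N}c_{m}c_{n},
\]
and the log-convexity above, together with the arithmetic--geometric mean inequality, shows $w_{N}\ge0$. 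The target $\partial_x(W/S^{2})\ge0$ means exactly that $W/S^{2}$ is increasing, so by Lemma \ref{l2} it is enough to prove that the coefficient ratio $w_{N}/s_{N}$ is nondecreasing in $N$; the inner Cauchy-product sums are then organised so that Lemma \ref{l1} applies.

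The main obstacle is precisely this last step: establishing that $w_{N}/s_{N}$ is monotone in $N$. Log-convexity of the $c_{n}(\beta)$ delivers the sign of $w_{N}$ but not the monotonicity of the ratio, and it is here that the hypotheses $a>b>c>0$ and $b>1$ must be used in an essential way, controlling the quotients $c_{n}'/c_{n}$ and $c_{n}''/c_{n}$ (equivalently the parameter $a$ entering through the factor $(q^{a};q)_{n}$) so that the convolution comparison required by Lemma \ref{l1} holds. I expect the bulk of the technical work, and the only place where $q\in\,]0,1[$ interacts delicately with the parameter constraints, to lie in verifying this coefficient ratio monotonicity.
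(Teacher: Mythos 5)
Your proposal splits into two parts of very unequal strength. The first part --- log-convexity of the coefficients $c_n(\beta)$ in $\beta$, hence log-convexity of $\beta\mapsto\phi(q^{a},q^{\beta},q,x)$ as a sum of log-convex functions, hence the Tur\'an inequality (\ref{t1}) by the midpoint inequality --- is correct, and is in fact a cleaner route to (\ref{t1}) than the paper's, needing only $b>c>0$. But (\ref{t1}) is only the corollary; the main claim of the theorem is that $x\longmapsto h(a,b,c,q,x)$ is increasing, and there your argument has a genuine gap that you yourself flag. You reduce the monotonicity to $\partial_x\partial_\beta^{2}\log\phi\ge 0$, then to the statement that $w_N/s_N$ is nondecreasing in $N$, where $w_{N}=\tfrac12\sum_{m+n=N}(c_{m}''c_{n}+c_{m}c_{n}''-2c_{m}'c_{n}')$, and then stop. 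This is not a routine verification left to the reader: the summands of $w_N$ are differences rather than products of fixed positive sequences, so Lemma \ref{l1} does not apply to them in any obvious ordering of the pairs $(m,n)$, and it is not even clear that the intermediate claim $\partial_x\partial_\beta^{2}\log\phi\ge 0$ is true under the stated hypotheses --- it is strictly stronger than what the theorem requires.

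The paper avoids all of this by never differentiating in the parameter. It writes $h$ itself as a ratio of two power series whose coefficients are Cauchy convolutions, $v_{n}=\sum_{k}u_{k}(a,b-c,q)u_{n-k}(a,b+c,q)$ over $w_{n}=\sum_{k}u_{k}(a,b,q)u_{n-k}(a,b,q)$, and reduces everything to the termwise ratios $A_{n,k}=\frac{u_{k}(a,b-c,q)u_{n-k}(a,b+c,q)}{u_{k}(a,b,q)u_{n-k}(a,b,q)}$ being increasing in $k$, which follows from the elementary two-factor identity
\[
\frac{A_{n,k+1}}{A_{n,k}}=\left(\frac{1-q^{b+k}}{1-q^{b-c+k}}\right)\cdot\left(\frac{1-q^{b+c+n-k-1}}{1-q^{b+n-k-1}}\right)\ge 1,
\]
each factor being at least $1$ under the stated hypotheses because $q\in\,]0,1[$ and $c>0$. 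Lemmas \ref{l1} and \ref{l2} then finish the argument directly at the level of $h$. If you want to complete your own route you must supply the missing convolution comparison for $w_{N}/s_{N}$; the far shorter path is to apply the lemmas to $h$ itself rather than to $\partial_\beta^{2}\log\phi$.
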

\begin{proof}For convenience let us write $\phi(q^{a},q^{b},q,x)$ as
$$\phi(q^{a},q^{b},q,x)=\sum_{n=0}^{\infty}u_{n}(a,b,q)x^{n},$$
where
$$u_{n}(a,b,q)=\frac{(q^{a};q)_{n}(1-q)^n}{(q^{b};q)_{n}(q;q)_n}.$$
Then
\begin{displaymath}
\begin{split}
h(a,b,c,q,x)&=\frac{\left(\sum_{n=0}^{\infty}u_{n}(a,b-c,q)x^{n}\right)
\left(\sum_{n=0}^{\infty}u_{n}(a,b-c,q)x^{n}\right)}{\left(\sum_{n=0}^{\infty}u_{n}(a,b,q)x^{n}\right)^{2}}=\\
&=\frac{\sum_{n=0}^{\infty}v_{n}(a,b,c,q)x^{n}}{\sum_{n=0}^{\infty}w_{n}(a,b,q)x^{n}},
\end{split}
\end{displaymath}
with\\
$$v_{n}(a,b,c,q)=\displaystyle{\sum_{k=0}^{n}u_{k}(a,b-c,q)u_{n-k}(a,b+c,q)}$$
and
$$w_{n}(a,b,q)=\sum_{k=0}^{n}u_{k}(a,b,q)u_{n-k}(a,b,q).$$
Let define the sequences $(A_{n,k})_{k\geq0}$ by
$$A_{n,k}(a,b,c,q)=\frac{u_{k}(a,b-c,q)u_{n-k}(a,b+c,q)}{u_{k}(a,b,q)u_{n-k}(a,b,q)}=
\frac{(q^{b};q)_{k}(q^{b};q)_{n-k}}{(q^{b-c};q)_{k}(q^{b+c};q)_{n-k}}$$
and evaluate
\begin{equation*}
\begin{split}
\frac{A_{n,k+1}(a,b,c,q)}{A_{n,k}(a,b,c,q)}&=\frac{(q^{b};q)_{k+1}(q^{b};q)_{n-k-1}(q^{b-c};q)_{k}(q^{b+c};q)_{n-k}}
{(q^{b-c};q)_{k+1}(q^{b+c};q)_{n-k-1}(q^{b};q)_{k}(q^{b};q)_{n-k}}=\\
&=\left(\frac{(q^{b};q)_{k+1}}{(q^{b};q)_{k}}\right).\left(\frac{(q^{b-c};q)_{k}}{(q^{b-c};q)_{k+1}}\right).
\left(\frac{(q^{b};q)_{n-k-1}}{(q^{b};q)_{n-k}}\right).\left(\frac{(q^{b+c};q)_{n-k}}{(q^{b+c};q)_{n-k}}\right)=\\
&=\left(\frac{1-q^{b+k}}{1-q^{b-c+k}}\right).\left(\frac{1-q^{b+c+n-k-1}}{1-q^{b+n-k-1}}\right).
\end{split}
\end{equation*}
Since $q\in]0,1[$ and $b>1$ it follows $\frac{A_{n,k+1}(a,b,c,q)}{A_{n,k}(a,b,c,q)}\geq1$ and consequently the sequence $(A_{n,k}(a,b,c,q))_{k\geq0}$ is increasing. We conclude that $C_n$ defined by $C_{n}=\frac{u_{n}}{v_{n}}$ is increasing  by Lemma \ref{l1}. Thus the function $x\longmapsto h(a,b,c,q,x)$ is increasing on $[0,\infty[$ by Lemma \ref{l2}.\\
Furthermore,
\[\lim_{x\longrightarrow0}h(a,b,q,x)=1,\]
and the Tur\'an type inequality (\ref{t1}) follows. So the proof of Theorem \ref{tt1} is complete.
\end{proof}

\begin{remark} The inequality (\ref{t1}) is interesting as a consequence of monotonicity property we consider. This inequality itself is not new and may be found in \cite{arb}.
\end{remark}

\section{\textbf{Monotonicity of ratios of $q$--hypergeometric functions}}
In this section we consider the function $h_{r}(a,b,c,q)$ defined by
\begin{eqnarray}\label{K4}
h_{r}(a,b,c,q)=\nonumber\\
\ \ \frac{\phi(q^{a_{1}},..,q^{a{_{r+1}}};q^{b_{1}-c_{1}},...,q^{b_{r}-c_{r}};q,x)
\phi(q^{a_{1}},..,q^{a{_{r+1}}};q^{b_{1}-c_{1}},...,q^{b_{r}-c_{r}};q,x)}
{\left[\phi(q^{a_1},..,q^{a_{r+1}};q^{b_1},...,q^{b_r};q,x)\right]^{2}}
\end{eqnarray}
where $a=(a_{1},...,a_{r+1})\;b=(b_{1},...,b_{r})$ and $c=(c_{1},...,c_{r})$ for all $a_{k},b_{k},c_{k}\in\mathbb{R},\;b_{k}\neq q^{-n},\,\, k=1,...,r,\, n\in\mathbb{N}_{0},\,\,0<|q|<1.$
\begin{theorem} Let $r\in\mathbb{N}\;q\in(0,1)\;a=(a_{0},...,a_r),\;b=(b_{1},...,b_{r})\;c=(c_1,...,c_r),\;b_i>c_i$ for $i=1,...,r.$ If $b_i>1$ for $i=1,...,r$, then the function $h_r(a,b,c,q)$ is strictly increasing on $[0,1[.$ Moreover, if  $b_i>c_i, b_i>1$, and $q\in(0,1)$, then the next Tur\'an type inequality holds
\begin{eqnarray}\label{110}
\left[\phi(q^{a_1},..,q^{a_{r+1}};q^{b_1},...,q^{b_r};q,x)\right]^{2}<\nonumber\\
\phi(q^{a_{1}},..,q^{a{_{r+1}}};q^{b_{1}-c_{1}},...,q^{b_{p}-c_{p}};q,x)
\phi(q^{a_{1}},..,q^{a{_{p+1}}};q^{b_{1}-c_{1}},...,q^{b_{r}-c_{r}};q,x).
\end{eqnarray}
\end{theorem}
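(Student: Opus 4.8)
The plan is to transplant the proof of Theorem \ref{tt1} almost word for word, the only change being that the single denominator parameter is replaced by the vector $b=(b_1,\dots,b_r)$. First I would expand each basic hypergeometric factor into its power series. Because here there are $r+1$ upper parameters and $r$ lower parameters, the exponent $1+r-p$ of the factor $[(-1)^{n}q^{n(n-1)/2}]^{1+r-p}$ in (\ref{kh}) equals zero, so that term is absent and
\[
\phi(q^{a_1},\dots,q^{a_{r+1}};q^{b_1},\dots,q^{b_r};q,x)=\sum_{n=0}^{\infty}u_n(a,b,q)\,x^n,\qquad
u_n(a,b,q)=\frac{\prod_{i=1}^{r+1}(q^{a_i};q)_n}{\prod_{j=1}^{r}(q^{b_j};q)_n\,(q;q)_n}\,\kappa_n,
\]
where $\kappa_n$ collects the factors depending only on $n$ and $q$ (powers of $1-q$ and the like) that are common to the three functions. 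The series converges for $|x|<1$. Writing the two numerator functions with lower vectors $b-c$ and $b+c$ turns the ratio into $h_r=\big(\sum_n v_n x^n\big)\big/\big(\sum_n w_n x^n\big)$ with $v_n=\sum_{k=0}^{n}u_k(a,b-c,q)\,u_{n-k}(a,b+c,q)$ and $w_n=\sum_{k=0}^{n}u_k(a,b,q)\,u_{n-k}(a,b,q)$; the hypotheses $b_j>c_j>0$ and $0<q<1$ force $q^{b_j},q^{b_j\pm c_j}<1$, so every $u_n$ is positive and both coefficient sequences are admissible for Lemmas \ref{l1} and \ref{l2}.

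Next I would form, for each fixed $n$, the termwise ratio
\[
A_{n,k}(a,b,c,q)=\frac{u_k(a,b-c,q)\,u_{n-k}(a,b+c,q)}{u_k(a,b,q)\,u_{n-k}(a,b,q)}.
\]
The decisive simplification is that the upper parameters $(q^{a_i};q)$, the factors $(q;q)$ and the constants $\kappa_k\kappa_{n-k}$ cancel, and what remains splits as a product over the coordinate index $j$:
\[
A_{n,k}(a,b,c,q)=\prod_{j=1}^{r}\frac{(q^{b_j};q)_k\,(q^{b_j};q)_{n-k}}{(q^{b_j-c_j};q)_k\,(q^{b_j+c_j};q)_{n-k}}.
\]
Each factor is exactly the one--parameter quantity handled in Theorem \ref{tt1} with $(b,c)$ replaced by $(b_j,c_j)$, so the consecutive ratio factorizes accordingly,
\[
\frac{A_{n,k+1}(a,b,c,q)}{A_{n,k}(a,b,c,q)}=\prod_{j=1}^{r}\left(\frac{1-q^{b_j+k}}{1-q^{b_j-c_j+k}}\right)\left(\frac{1-q^{b_j+c_j+n-k-1}}{1-q^{b_j+n-k-1}}\right).
\]
By the same elementary sign analysis as in Theorem \ref{tt1}, the hypotheses $0<q<1$, $c_j>0$ and $b_j>1$ make each of the two brackets at least $1$; hence every factor is $\ge 1$, the whole product is $\ge 1$, and $(A_{n,k})_{k}$ is increasing in $k$ for every fixed $n$.

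The passage from this coefficient monotonicity to the monotonicity of $h_r$ is then identical to Theorem \ref{tt1}: Lemma \ref{l1} promotes the monotonicity of the termwise ratio $A_{n,k}$ to monotonicity of the convolved ratio $v_n/w_n$, and Lemma \ref{l2} transfers this to the quotient of power series, yielding that $h_r$ is strictly increasing on $[0,1[$. Letting $x\to 0$ gives $h_r\to 1$, and combining the monotonicity with this limit produces the Tur\'an type inequality (\ref{110}).

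The genuinely new content beyond Theorem \ref{tt1} is exactly the factorization of $A_{n,k}$ over the index $j$ into independent one--parameter blocks; once that is recorded, the monotonicity reduces coordinate by coordinate to the computation already performed. Consequently the only point requiring real attention is the bookkeeping of signs and positivity: verifying that all denominators $1-q^{b_j+n-k-1}$ and $1-q^{b_j-c_j+k}$ stay positive and that each bracket exceeds $1$ under $b_j>1$ and $b_j>c_j>0$, which is precisely what makes the coefficient sequences $v_n$ and $w_n$ positive and thus legitimizes the use of Lemmas \ref{l1} and \ref{l2}.
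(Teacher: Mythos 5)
Your proposal follows essentially the same route as the paper: expand the three ${}_{r+1}\Phi_r$ series, form the convolution coefficients, observe that the termwise ratio $W_{n,k}$ factorizes over the coordinate index $j$ into the one-parameter blocks already treated in Theorem \ref{tt1}, check that each consecutive ratio is $\geq 1$ under $0<q<1$, $b_j>c_j>0$, $b_j>1$, and then invoke Lemmas \ref{l1} and \ref{l2} together with the limit $h_r\to 1$ as $x\to 0$. The argument and its key computation coincide with the paper's proof, and your explicit remark that the factor $[(-1)^nq^{n(n-1)/2}]^{1+r-p}$ disappears because $p=r+1$ is a useful clarification the paper leaves implicit.
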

\begin{proof}
By using the inequality (\ref{K4}), we can write $h_r$ in the form
\begin{equation}
\begin{split}
h_{r}(a,b,q,x)&=\frac{\left(\sum_{n=0}^{\infty}
\frac{(q^{a_{1}};q)_{n}...(q^{a_{r+1}};q)_{n}x^{n}}{(q^{b_{1}-c_{1}};q)_{n}...(q^{b_{r}-c_{r}};q)_{n}(q;q)_{n}}
\right)}
{\left(\sum\frac{(q^{a_{1}};q)_{n}...(q^{a_{r+1}};q)_{n}x^{n}}{(q^{b_{1}};q)_{n}...(q^{b_{r}};q)_{n}(q;q)_{n}}
\right)^{2}}\cdot\\
&.\left(\sum_{n=0}^{\infty}\frac{(q^{a_{1}};q)_{n}...(q^{a_{r+1}};q)_{n}x^{n}}
{(q^{b_{1+c_{1}}};q)_{n}...(q^{b_{r}+c_{r}};q)_{n}(q;q)_{n}}\right)=\\
&=\frac{\sum_{n=0}^{\infty}A_{n}(a,b,c,q)}{\sum_{n=0}^{\infty}B_{n}(a,b,c,q)}x^{n},
\end{split}
\end{equation}
with use of the next notations
\begin{equation*}
\begin{split}
A_{n}(a,b,c,q)&=\sum_{k=0}^{n}U_{k}(a,b,c,q)=\\
&=\sum_{k=0}^{n}\frac{\prod_{j=1}^{r+1}(q^{a_{j}};q)_{n-k}(q^{a_{j}};q)_{k}}{(q;q)_{k}(q;q)_{n-k}\prod_{j=1}^{r}(q^{b_{j}-c_{j}};q)_{k}(q^{b_{j}+c_{j}};q)_{n-k}}
\end{split}
\end{equation*}
and
\begin{equation*}
\begin{split}
B_{n}(a,b,c,q)&=\sum_{k=0}^{n}V_{k}(a,b,c,q)=\\
&=\sum_{k=0}^{n}\frac{\prod_{j=1}^{r+1}(q^{a_{j}};q)_{n-k}(q^{a_{j}};q)_{k}}{(q;q)_{k}(q;q)_{n-k}\prod_{j=1}^{r}(q^{b_{j}};q)_{k}(q^{b_{j}};q)_{n-k}}.
\end{split}
\end{equation*}
For fixed $n\in\mathbb{N}$ we define the sequence $(W_{n,k}(a,b,c,q))_{k\geq0}$ by
\begin{equation*}
\begin{split}
W_{n,k}(a,b,c,q)&=\frac{U_{k}(a,b,c,q)}{V_{k}(a,b,c,q)}=\\
&=\prod_{j=1}^{r}\frac{(q^{b_{j}};q)_{k}(q^{b_{j}};q)_{n-k}}{(q^{b_{j}-c_{j}};q)_{k}(q^{b_{j}+c_{j}};q)_{n-k}}.
\end{split}
\end{equation*}
For $n,k\in\mathbb{N}$ we evaluate
\begin{equation*}
\begin{split}
\frac{W_{n,k+1}(a,b,c,q)}{W_{n,k}(a,b,c,q)}&=\prod_{j=1}^{r}
\left[\frac{(q^{b_{i}};q)_{k+1}}{(q^{b_{j}};q)_{k}}\right].
\left[\frac{(q^{b_{j}};q)_{n-k-1}}{(q^{b_{j}};q)_{n-k}}\right].
\left[\frac{(q^{b_{j}-c_{j}};q)_{k}}{(q^{b_{j}};q)_{k+1}}\right].
\left[\frac{(q^{b_{j}+c_{j}};q)_{n-k}}{(q^{b_{j}+c_{j}};q)_{n-k-1}}\right]=\\
&=\prod_{j=1}^{r}\left[\frac{1-q^{b_{j}+k}}{1-q^{b_{j}-c_{j}-k}}\right].\left[\frac{1-q^{b_{j}+c_{j}+n-k-1}}
{1-q^{b_{j}+n-k-1}}\right].
\end{split}
\end{equation*}
\end{proof}
Since $0<q<1$ and $b_{j}>1$ for $j=1,...,r$ we conclude that $(W_{n,k})_{k}$ is increasing and consequently $\left(C_{n}=\frac{A_{n}}{B_{n}}\right)_{n\geq0}$ is increasing too by the Lemma \ref{l1}. Thus the function $x\longmapsto h_{r}(a,b,c,q)$ is increasing on $[0,1[$ by the Lemma \ref{l2}. Therefore the inequality (\ref{110}) follows immediately from the monotonicity of the function $h_r(a,b,c,q).$

There are applications of considered inequalities in the theory of transmutation operators for estimating transmutation kernels and norms (\cite{Sit3}--\cite{Sit5}) and for problems of function expansions by systems of integer shifts of Gaussians  (\cite{Sit6}--\cite{Sit7}).

\end{document}